\newtheorem{theorem}{Theorem}
\newtheorem{definition}{Definition}
\newtheorem{lemma}{Lemma}
\newtheorem{proposition}{Proposition}
\newtheorem{remark}{Remark}
\numberwithin{equation}{section}
 \numberwithin{remark}{section}
\numberwithin{definition}{section} \numberwithin{lemma}{section}
\numberwithin{proposition}{section}
 \numberwithin{example}{section}
 \numberwithin{corollary}{section}
\numberwithin{theorem}{section}
\begin{document}
\title[]{Realizability of the group of rational self-homotopy  equivalences}
\author{Mahmoud Benkhalifa}
\email{makhalifa@uqu.edu.sa}
\address{Department of Mathematics. Faculty of Applied Sciences. Umm Al-Qura University. Mekka. Saudi Arabia }
\keywords{Groups of self-homotopy equivalences, rational homotopy
theory}
 \subjclass[2000]{Primary 55P62, 55Q05. Secondary 55S35}
\maketitle
\begin{abstract}
For a 1-connected  CW-complex $X$, let $\mathcal{E}(X)$ denote the
group of homotopy classes of self-homotopy equivalences of $X$.
The aim of this paper is to prove that, for every
$n\in\Bbb N$,  there exists a 1-connected
rational CW-complex $X_{n}$ such that $\mathcal{E}(X_{n})\cong
\underset{2^{n+1}\mathrm{. times }}{\underbrace{\Bbb Z_{2}\oplus\cdots \Bbb \oplus \Bbb Z_{2}}}$.
\end{abstract}

\section{Introduction}

If $X$ is a  1-connected CW-complex, let $\mathcal{E}(X)$ denote
the set of homotopy classes of self-homotopy equivalences of $X$.
It is well-known that $\mathcal{E}(X)$ is a group with respect to
composition of homotopy classes. As pointed out by D. W.  Khan
\cite{K}, a basic problem about self-equivalences  is the
realizability of $\mathcal{E}(X)$, i.e., when for a given group
$G$ there exists a CW-complex X such that $\mathcal{E}(X)\cong G$.

\noindent In this paper we consider   a particular problem asked
by M. Arkowitz and G Lupton in \cite{MA}:  let $G$ be a finite
group, is there a rational 1-connected CW-complex X such that
$\mathcal{E}(X)\cong G$.

\noindent In this case the group $G$ is said to be
\emph{rationally realizable}.

\noindent Our main result says:

\noindent \textbf{Theorem}.  \emph{ The groups $
\underset{2^{n+1}\mathrm{. times }}{\underbrace{\Bbb Z_{2}\oplus\cdots \Bbb \oplus \Bbb Z_{2}}}$ are rationally realizable  for every
$n\in\Bbb N$. }

We will obtain these result working on the theory elaborated by
Sullivan \cite{H} which asserts  that  the homotopy of 1-connected
rational spaces is equivalent to the homotopy theory of
1-connected minimal cochain commutative algebras over the
rationals (mccas, for short). Recall that there exists a
reasonable concept of homotopy among cochain morphisms between two
mccas, analogous in many respects to the topological notion of
homotopy.

\noindent Because of this equivalence we deduce that
$\mathcal{E}(X)\cong \mathcal{E}(\Lambda V,\partial)$, where
$(\Lambda V,\partial)$ is  the mcca associated with $X$ (called
the minimal Sullivan model of $X$) and where $\mathcal{E}(\Lambda
V,\partial)$ denotes the group of self-homotopy equivalences of
$(\Lambda V,\partial)$.  Therefore we can translate our problem to
the following: let $G$ be a finite group. Is there a mcca
$(\Lambda V,\partial)$ such that $\mathcal{E}(\Lambda
V,\partial)\cong G$?

\noindent Note that, in \cite{MA}, M. Arkowitz and G Lupton have
given  examples showing that   $\Bbb Z_{2}$ and $\Bbb Z_{2}\oplus
\Bbb Z_{2}$ are rationally realizable. Recently and by using a
technique radically different from the one used in \cite{MA},  the
author  \cite{ben2}
  showed   that  $\underset{2^{n+1}\mathrm{. times }}{\underbrace{\Bbb Z_{2}\oplus\cdots \Bbb \oplus \Bbb Z_{2}}}$  are rationally realizable for all $n\leq 10$.

\section{The main result}
\subsection{Notion of homotopy for mccas}

Let $(\Lambda (t,dt),d)$ be the free commutative graded algebra on
the basis $\{t,dt\}$ with $\mid t\mid=0$, $\mid dt\mid$=1, and let
$d$ be the differential sending $t\mapsto dt$. Define augmentations:
$$\varepsilon_{0},\varepsilon_{1}:(\Lambda (t,dt),d)\to \Bbb Q\,\,\,\text{ by }\,\,\,\varepsilon_{0}(t)=0,\varepsilon_{1}(t)=1$$
 \begin{definition}
(\cite{H})
Two cochain morphisms  $\alpha_{0},\alpha_{1}:(\Lambda V
,\partial)\to(\Lambda W,\delta)$ are homotopic if there is a cochain
morphism $\Phi:(\Lambda V ,\partial)\to (\Lambda W,\delta)\otimes
(\Lambda (t,dt),d)$  such that  , $i=0,1$. Here $\Phi$ is called a
homotopy from $\alpha_{0}$ to $\alpha_{1}$.
\end{definition}
Thereafter we will need the following lemma.
\begin{lemma}
\label{l3} Let  $\alpha_{0},\alpha_{1}:(\Lambda V^{\leq n+1}
,\partial)\to(\Lambda W^{\leq n+1},\delta)$ be two cochain
morphisms such that $\alpha_{0}=\alpha_{1}$  on $V^{\leq n}.$
Assume that for every generator  $v\in V^{n+1}$ we have:
$$\alpha_{0}(v)=\alpha_{1}(v)+\partial(y_{v})$$
where $y_{v}\in (\Lambda W^{\leq n+1})^{n}$. Then $\alpha_{0}$ and
$ \alpha_{1}$ are homotopic.
\end{lemma}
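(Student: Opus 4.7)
The plan is to construct a homotopy $\Phi:(\Lambda V^{\leq n+1},\partial)\to (\Lambda W^{\leq n+1},\delta)\otimes (\Lambda(t,dt),d)$ explicitly, by prescribing $\Phi$ on the generators $V^{\leq n+1}$ and extending multiplicatively. On the subspace $V^{\leq n}$, where $\alpha_0$ and $\alpha_1$ agree, I would set $\Phi(v)=\alpha_0(v)\otimes 1=\alpha_1(v)\otimes 1$. This prescription already defines a cochain map on the sub-cdga $\Lambda V^{\leq n}$ (since it is obtained by tensoring the cochain map $\alpha_0$ with the unit of $\Lambda(t,dt)$), and applying either $\mathrm{id}\otimes\varepsilon_0$ or $\mathrm{id}\otimes\varepsilon_1$ returns $\alpha_0$ and $\alpha_1$ respectively.

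For a generator $v\in V^{n+1}$ I would use the ``linear interpolation with correction'' formula
$$\Phi(v)\,=\,\alpha_0(v)\otimes (1-t)\,+\,\alpha_1(v)\otimes t\,+\,\epsilon\, y_v\otimes dt,$$
where $\epsilon=\pm 1$ is a sign depending only on the parity of $n$ and on the chosen convention for the differential $D=\delta\otimes 1 + (-1)^{\bullet}\otimes d$ on the tensor product. The augmentations act as $\varepsilon_0(t)=\varepsilon_0(dt)=0$ and $\varepsilon_1(t)=1,\ \varepsilon_1(dt)=0$, so $(\mathrm{id}\otimes\varepsilon_i)\Phi(v)=\alpha_i(v)$ automatically, giving the required boundary values of a homotopy.

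The main verification is the chain-map identity $D\Phi(v)=\Phi(\partial v)$ on $V^{n+1}$. Because $\partial v\in\Lambda V^{\leq n}$, the right-hand side equals $\alpha_0(\partial v)\otimes 1=\delta\alpha_0(v)\otimes 1$, using that $\alpha_0$ and $\alpha_1$ coincide on $\Lambda V^{\leq n}$. Expanding the left-hand side by the Leibniz rule and using $\delta\alpha_0(v)=\delta\alpha_1(v)$ (again because $\partial v\in\Lambda V^{\leq n}$), the terms constant in $t$ collapse to $\delta\alpha_0(v)\otimes 1$, while the coefficients of $dt$ combine, up to signs, into $\bigl(\alpha_1(v)-\alpha_0(v)+\epsilon\,\delta y_v\bigr)\otimes dt$. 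The hypothesis $\alpha_0(v)=\alpha_1(v)+\delta y_v$ (reading the $\partial$ in the statement as the differential $\delta$ on $\Lambda W$, which is the only thing that makes sense dimensionally) kills this combination for the appropriate choice of $\epsilon$. Extending $\Phi$ as an algebra morphism from generators to all of $\Lambda V^{\leq n+1}$ preserves the differential by the Leibniz rule, yielding the desired homotopy.

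The step that requires the most care is the sign bookkeeping in the formula for $\Phi(v)$: one must keep track of the Koszul signs coming from the tensor-product differential and from commuting $dt$ past $y_v$. This is a routine but error-prone calculation and is the only real obstacle; the underlying idea, that a coboundary $\delta y_v$ can be exhibited as the value at $t=1$ minus the value at $t=0$ of the primitive $t\,\delta y_v$, is essentially the classical cylinder construction.
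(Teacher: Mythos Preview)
Your proposal is correct and follows essentially the same route as the paper: both construct the homotopy explicitly on generators by the linear interpolation formula and extend multiplicatively. Your expression $\Phi(v)=\alpha_0(v)(1-t)+\alpha_1(v)t+\epsilon\,y_v\,dt$ rewrites, using $\alpha_0(v)-\alpha_1(v)=\delta y_v$, as $\alpha_0(v)-\delta(y_v)\,t+\epsilon\,y_v\,dt$, which is exactly the paper's formula $\alpha_1(v)+\partial(y_v)\,t-(-1)^{|\partial(y_v)|}y_v\,dt$ with the roles of $\alpha_0,\alpha_1$ interchanged and the sign $\epsilon$ pinned down; the only cosmetic difference is that the paper fixes the Koszul sign explicitly rather than leaving it as a parameter to be determined.
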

\begin{proof}
Define $\Phi:(\Lambda V ,\partial)\to (\Lambda V,\partial)\otimes
(\Lambda (t,dt),d)$ by setting:
\begin{equation}\label{18}
\Phi(v)=\alpha_{1}(v)+\partial(y_{v})t-(-1)^{|\partial(y_{v})|}y_{v}dt\,\,\,\,\,\,\,\,\,\mathrm{
and }\,\,\,\Phi=\alpha_{0}\,\,\,\mathrm{ on }\,\,\,V^{\leq n}
\end{equation}
It is clear that $\Phi$ is a cochain algebra satisfying
$(id.\varepsilon_{0})\circ \Phi=\alpha_{1}$,
$(id.\varepsilon_{1})\circ \Phi=\alpha_{0}$
\end{proof}

\subsection{The linear maps $b^{n}$, $n\geq 3$}
\begin{definition}
\label{d4}
Let $(\Lambda V,\partial)$ be a 1-connected  mcca. For every  $n\geq 3$, we define the linear map  $b^{n}: V^{n}\rightarrow
H^{n+1}(\Lambda V^{\leq n-1})$ by setting:
\begin{equation}
b^{n}(v_{n})=[\partial(v_{n})] \label{106}
\end{equation}
Here $[\partial(v_{n})]$  denotes
 the cohomology class of $\partial(v_{n})\in
(\Lambda V^{\leq n-1})^{n+1}$.
\end{definition}
For every    1-connected  mcca $(\Lambda V,\partial)$, the linear
map $b_{n}$ are natural. Namely if $\alpha:(\Lambda V
,\partial)\to(\Lambda W,\delta)$ is a cochain morphism between two
1-connected  mccas, then  the following diagram commutes for all
$n\geq 2$:

\begin{picture}(300,90)(-50,30)
\put(60,100){$V^{n+1}\hspace{1mm}\vector(1,0){140}\hspace{1mm}W^{n+1}$}
 \put(69,76){\scriptsize $b^{n+1}$} \put(238,76){\scriptsize $b'^{n+1}$}
\put(66,96){$\vector(0,-1){38}$} \put(235,96){$\vector(0,-1){38}$}
\put(155,103){\scriptsize $\widetilde{\alpha}^{n+1}$} \put(135,52){\scriptsize
$H^{n+2}(\alpha_{(n)})$} \put(50,46){$H^{n+2}(\Lambda V^{\leq n})
\hspace{1mm}\vector(1,0){100}\hspace{1mm}H^{n+2}(\Lambda W^{\leq n})
\hspace{1mm}$} \put(400,76){\scriptsize $(1)$}
\end{picture}

\noindent where $\widetilde{\alpha}:V^{*}\to W^{*}$ is the graded
homomorphism induced by $\alpha$ on the indecomposables and where
$\alpha_{(n)}:(\Lambda V^{\leq n} ,\partial)\to (\Lambda W^{\leq
n} ,\delta)$ is the restriction of $\alpha$.
\subsection{The groups $\mathcal{C}^{n+1}$, where $n\geq 2$}

\begin{definition}\label{d3}
    Given a 1-connected mcca $(\Lambda V^{\leq
   n+1},\partial)$. Let $\mathcal{C}^{n+1}$ be the subset  of $Aut(V^{n+1})\times \mathcal{E}(\Lambda V^{\leq
   n},\partial)$ consisting of the couples $(\xi^{n+1},[\alpha_{(n)}])$  making  the following
    diagram commutes:

\begin{picture}(300,90)(-50,30)
\put(60,100){$V^{n+1}\hspace{1mm}\vector(1,0){140}\hspace{1mm}V^{n+1}$}
 \put(68,76){\scriptsize $b^{n+1}$} \put(238,76){\scriptsize $b^{n+1}$}
\put(66,96){$\vector(0,-1){37}$} \put(235,96){$\vector(0,-1){37}$}
\put(155,103){\scriptsize $\xi^{n+1}$} \put(145,52){\scriptsize
$H^{n+2}(\alpha_{(n)})$} \put(53,48){$H^{n+2}(\Lambda V^{\leq n})
\hspace{1mm}\vector(1,0){100}\hspace{1mm}H^{n+2}(\Lambda V^{\leq n})$} \put(400,76){\scriptsize $(2)$}
\end{picture}

\noindent where   $Aut(V^{n+1})$ is the group of automorphisms of the vector space $V^{n+1}$.
\end{definition}
Equipped with the composition laws,  the set $\mathcal{C}^{n+1}$
becomes a subgroup of $Aut(V^{n+1})\times \mathcal{E}(\Lambda
V^{\leq
   n},\partial)$.

\begin{proposition}\label{p3}
   There exists  a surjective homomorphism $\Phi^{n+1}:\mathcal{E}(\Lambda V^{\leq
   n+1},\partial)\to\mathcal{C}^{n+1}$ given by the relation:
$$\Phi^{n+1}([\alpha])=(\widetilde{\alpha}^{n+1},[\alpha_{(n)}])$$
\end{proposition}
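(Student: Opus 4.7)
The plan is to verify, in order: (i) $\Phi^{n+1}$ is well defined on homotopy classes, (ii) its image is contained in $\mathcal{C}^{n+1}$, (iii) it respects composition, and (iv) it is surjective. Points (i)--(iii) are essentially formal; (iv) is the substantive step.

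For (i), a homotopy $\Phi$ from $\alpha$ to $\alpha'$ restricts to a homotopy between $\alpha_{(n)}$ and $\alpha'_{(n)}$, whence $[\alpha_{(n)}]=[\alpha'_{(n)}]$; and since homotopic cochain morphisms of 1-connected mccas induce the same linear map on indecomposables, $\widetilde{\alpha}^{n+1}=\widetilde{\alpha'}^{n+1}$. For (ii), applying the naturality diagram (1) to $\alpha$ itself (with $W=V$ and $\alpha_{(n)}$ on the bottom row) produces exactly the commuting square (2) for the pair $(\widetilde{\alpha}^{n+1},[\alpha_{(n)}])$. For (iii), both restriction to $V^{\leq n}$ and passage to indecomposables in degree $n+1$ are functorial under composition, so $\Phi^{n+1}([\alpha\circ\beta])=\Phi^{n+1}([\alpha])\circ\Phi^{n+1}([\beta])$.

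For (iv), given $(\xi^{n+1},[\alpha_{(n)}])\in\mathcal{C}^{n+1}$, I would extend the algebra morphism $\alpha_{(n)}:(\Lambda V^{\leq n},\partial)\to(\Lambda V^{\leq n},\partial)$ to a cochain self-morphism $\alpha$ of $(\Lambda V^{\leq n+1},\partial)$ by prescribing $\alpha(v)$ for each generator $v\in V^{n+1}$. The commutativity of (2) gives $[\alpha_{(n)}(\partial v)]=[\partial \xi^{n+1}(v)]$ in $H^{n+2}(\Lambda V^{\leq n})$, hence one can choose $y_{v}\in(\Lambda V^{\leq n})^{n+1}$ satisfying $\alpha_{(n)}(\partial v)-\partial \xi^{n+1}(v)=\partial y_{v}$. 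Setting $\alpha(v):=\xi^{n+1}(v)+y_{v}$, one gets $\partial\alpha(v)=\alpha_{(n)}(\partial v)$, so $\alpha$ extends $\alpha_{(n)}$ as a cochain morphism; and since $y_{v}\in\Lambda V^{\leq n}$ has no component in $V^{n+1}$, the induced map on $V^{n+1}$ is precisely $\xi^{n+1}$.

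It remains to check $\alpha\in\mathcal{E}(\Lambda V^{\leq n+1},\partial)$: the hypothesis $[\alpha_{(n)}]\in\mathcal{E}(\Lambda V^{\leq n},\partial)$ forces $\widetilde{\alpha}_{(n)}$ to be an automorphism of $V^{\leq n}$, and $\xi^{n+1}\in Aut(V^{n+1})$, so the induced linear map on $V^{\leq n+1}$ is an isomorphism, making $\alpha$ itself a cochain algebra isomorphism. By construction $\Phi^{n+1}([\alpha])=(\xi^{n+1},[\alpha_{(n)}])$. The main obstacle is exactly step (iv): the existence of the elements $y_{v}$ is guaranteed by the commutativity of (2), which is precisely what motivates the definition of $\mathcal{C}^{n+1}$ by that cohomological compatibility, both necessary and sufficient for the extension.
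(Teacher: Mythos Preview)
Your proof is correct and follows essentially the same approach as the paper: well-definedness via the fact that homotopic morphisms induce the same map on indecomposables and restrict to homotopic maps (the paper's Remark~2.1), and surjectivity by choosing correction terms $y_v$ (the paper's $u_\sigma$) from the cohomological identity encoded in diagram~(2), then defining $\alpha(v)=\xi^{n+1}(v)+y_v$. The only cosmetic difference is that you conclude $\alpha$ is an actual cochain algebra isomorphism from the isomorphism on indecomposables, whereas the paper invokes the (equivalent, for minimal algebras) statement that such a morphism is a homotopy equivalence.
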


\begin{remark}
\label{rr1} It is well-known (\cite{H} proposition 12.8) that if two
cochain morphisms $\alpha,\alpha':(\Lambda V^{\leq
   n+1} ,\partial)\to(\Lambda
V^{\leq
   n+1},\partial)$ are homotopic, then they induce the same graded linear
maps on the indecomposables, i.e.,
$\widetilde{\alpha}=\widetilde{\alpha'}$, moreover
$\alpha_{(n)},\alpha'_{(n)}$ are homotopic and by using the diagram (1) we deduce that   the map $\Phi^{n+1}$
is
well-defined.\\
\end{remark}
\begin{proof}
Let $(\xi^{n+1},[\alpha_{(n)}])\in \mathcal{C}^{n+1}$. Choose
$(v_{\sigma})_{\sigma\in\Sigma}$ as a basis of $V^{n+1}$. Recall
that, in the diagram (2), we have:
\begin{eqnarray}
\label{117}H^{n+2}(\alpha_{(n)})\circ
b^{n+1}(v_{\sigma})&=&\alpha_{(n)}\circ\partial(v_{\sigma})+\mathrm{Im}\,\partial_{\leq n}\nonumber\\
b^{n+1}\circ
\xi^{n+1}(v_{\sigma})&=&\partial\circ\xi^{n+1}(v_{\sigma})+\mathrm{Im}\,\partial_{\leq
n}\label{103}
\end{eqnarray}
where $\partial_{\leq n}:(\Lambda V^{\leq n})^{n+1} \to (\Lambda
V^{\leq n})^{n+2} $. Note that here we have used the relation
(\ref{106}).

\noindent Since by definition \ref{d3} this diagram commutes, the
element
$(\alpha_{(n)}\circ\partial-\partial\circ\xi^{n+1})(v_{\sigma})\in\mathrm{Im}
\,\partial_{\leq n}$. As a consequence there exists $u_{\sigma}\in
(\Lambda V^{\leq n})^{n+1}$ such that:
\begin{eqnarray}
(\alpha_{(n)}\circ\partial-\partial\circ\xi^{n+1})(v_{\sigma})=\partial_{\leq
n}(u_{\sigma})\label{123}.
\end{eqnarray}
Thus we define $\alpha:(\Lambda V^{\leq n+1}
,\partial)\rightarrow(\Lambda V^{\leq n+1},\partial)$ by setting:
\begin{eqnarray}
\alpha(v_{\sigma})=\xi^{n+1}(v_{\sigma})+u_{\sigma}\,\,\,,\,\,
v_{\sigma}\in V^{n+1}\,\,\,\,\,\text{and}\,\,\,\,\,
\alpha=\alpha_{(n)}\text{ on }V^{\leq
n}.\label{113}
\end{eqnarray}
As $\partial(v_{\sigma})\in (\Lambda V^{\leq n})^{n+2}$ then, by
(\ref{123}), we get:
\begin{eqnarray*}
\partial\circ\alpha(v_{\sigma})=\partial(\xi^{n+1}(v_{\sigma}))+\partial_{\leq n}(u_{\sigma})=\alpha_{(n)}\circ\partial(v_{\sigma})=
\alpha\circ\partial(v_{\sigma})
\end{eqnarray*}
So  $\alpha$ is a  cochain morphism.   Now due to the fact that
$u_{\sigma}\in (\Lambda V^{\leq n})^{n+1}$,   the linear map
$\widetilde{\alpha}^{n+1}:V^{n+1}\to V^{n+1}$ coincides with
$\xi^{n+1}$.

\noindent Finally it  is well-known (see \cite{H}) that  any
cochain morphism between two 1-connected mccas inducing a graded
linear isomorphism on the indecomposables is a homotopy
equivalence. Consequently  $\alpha\in \mathcal{E}(\Lambda V^{\leq
   n+1},\partial)$. Therefore  $\Phi^{n+1}$ is surjective.

\noindent Finally the following relations:
$$\Phi^{n+1}([\alpha].[\alpha'])=(\widetilde{\alpha\circ\alpha'}^{n+1},[\alpha_{(n)}\circ \alpha'_{(n)}])=(\widetilde{\alpha}^{n+1},[\alpha_{(n)}])\circ (\widetilde{\alpha'}^{n+1},[\alpha'_{(n)}])=\Phi^{n+1}([\alpha])\circ\Phi^{n+1}([\alpha'])$$
assure that $\Phi^{n+1}$ is a homomorphism of groups
\end{proof}
\begin{remark}\label{r4}
    Assume that
    $(\alpha_{(n)}\circ\partial-\partial\circ\xi^{n+1})(V^{n+1})\cap
    \partial_{\leq n}
\big((\Lambda V^{\leq n})^{n+1}\big)=\{0\}$,
    then the element $u_{\sigma}\in
(\Lambda V^{\leq n})^{n+1}$, given in the formula (\ref{123}),  must be
a cocycle. Therefore if there are no trivial coycles belong to $
(\Lambda V^{\leq n})^{n+1}$, then the cochain  isomorphism  $\alpha$
defined in (\ref{113}) will satisfy
$\alpha(v_{\sigma})=\xi^{n+1}(v_{\sigma})$, so it is unique. Hence,
in this case, the map $\Phi^{n+1}$ is an isomorphism.
\end{remark}

\subsection{Main theorem}

\noindent For every natural $n\in\Bbb N$, let us consider the following 1-connected mcca:

 $\Lambda V=\Lambda
(x_{1},\ldots,x_{n+2},y_{1},y_{2},y_{3},w,z)$ with
$|x_{n+2}|=2^{n+2}-2$, $|x_{k}|=2^{k}$ for every $1\leq k\leq
n+1$. \noindent The differential is as follows:
 $$ \partial(x_{1})=\cdots=\partial(x_{n+2})=0 \,\,\,\,\,,\,\,\,\,\, \partial(y_{1})=x_{n+1}^{3}x_{n+2} \,\,\,\,\,,\,\,\,\,\,\partial(y_{2})=x_{n+1}^{2}x_{n+2}^{2} $$
 $$\partial(y_{3})=x_{n+1}x_{n+2}^{3}\,\,\,\,\,,\,\,\,\,\,\partial(w)=x_{1}^{28}x_{2}^{18}x_{3}^{18}\ldots x_{n}^{18}$$
$$\partial(z)=x_{1}^{2^{n}+7}(y_{1}y_{2}x_{n+2}^{3}-y_{1}y_{3}x_{n+1}x_{n+2}^{2}+
y_{2}y_{3}x_{n+1}^{2}x_{n+2})+\underset{k=1}{\overset{n+1}{\sum}}
x_{k}^{9.2^{n+2-k}}+x_{1}^{9}x_{n+2}^{9}$$ So that:
$$|y_{1}|=5.2^{n+1}-3\,\,\,\,,\,\,\,|y_{2}|=6.2^{n+1}-5\,\,\,\,\,,\,\,\,|y_{3}|=7.2^{n+1}-7\,\,\,\,\,,\,\,\,|w|=9.2^{n+2}-17\,\,\,\,\,,\,\,\,|z|=9.2^{n+2}-1$$
\begin{theorem}
\label{t1} $\mathcal{E}(\Lambda
V,\partial)\cong\underset{2^{n+1}}{\oplus}\Bbb
 Z_{2}$.
\end{theorem}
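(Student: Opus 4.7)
My plan is to apply the inductive machinery of Section~2, in particular Proposition~\ref{p3} and Remark~\ref{r4}, peeling off generators of $\Lambda V$ in order of increasing degree and computing $\mathcal{E}(\Lambda V^{\leq k},\partial)$ at each stage via the surjection $\Phi^{k+1}$. The degrees of the generators have been chosen so that each odd-degree generator $v\in\{y_1,y_2,y_3,w,z\}$ lives in a degree that contains no other odd indecomposables, which forces $\alpha(v)=\mu_v\,v+(\text{cocycle correction})$ with no room to mix among generators; in particular $\alpha(y_i)=\mu_i y_i$ admits no correction.

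The first substantive step is to kill the decomposable parts of $\alpha(x_k)$. Using the differentials $\partial y_i=x_{n+1}^{4-i}x_{n+2}^i$ for $i=1,2,3$, the identity $\partial\alpha(y_i)=\alpha(\partial y_i)=\alpha(x_{n+1})^{4-i}\alpha(x_{n+2})^i$ is a strict equality in the polynomial ring $\Lambda(x_1,\ldots,x_{n+2})$. A unique-factorization argument in characteristic zero then forces $\alpha(x_{n+1})=\lambda_{n+1}x_{n+1}$ and $\alpha(x_{n+2})=\lambda_{n+2}x_{n+2}$ as pure scalar multiples, with $\mu_i=\lambda_{n+1}^{4-i}\lambda_{n+2}^i$. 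An analogous argument using $\partial w=x_1^{28}x_2^{18}\cdots x_n^{18}$ forces $\alpha(x_k)=\lambda_k x_k$ for $k\leq n$, and gives $\lambda_w=\lambda_1^{28}\lambda_2^{18}\cdots\lambda_n^{18}$.

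The crucial step is to constrain the scalars $\lambda_k$ to be signs by exploiting $\partial z$. Each summand in $\partial z$ --- the syzygy block $T_1=x_1^{2^n+7}(y_1y_2x_{n+2}^3-y_1y_3x_{n+1}x_{n+2}^2+y_2y_3x_{n+1}^2x_{n+2})$, each monomial $x_k^{9\cdot 2^{n+2-k}}$, and $x_1^9 x_{n+2}^9$ --- represents a cohomology class in $H^{|z|+1}(\Lambda V^{<|z|},\partial)$. After identifying which classes are trivial (certain $x_k^{9\cdot 2^{n+2-k}}$-type terms become exact via $\partial w$) and which are linearly independent non-trivial classes, the equation $\partial\alpha(z)=\alpha(\partial z)$ modulo boundaries yields a multiplicative system of the form $\lambda_k^{9\cdot 2^{n+2-k}}=\lambda_z$, $\lambda_1^9\lambda_{n+2}^9=\lambda_z$, and $\lambda_1^{2^n+7}\lambda_{n+1}^5\lambda_{n+2}^6=\lambda_z$. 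Solving this over $\mathbb{Q}^*$ forces each $\lambda_k\in\{\pm 1\}$ with precise product relations. The residual contribution to $\mathcal{E}(\Lambda V,\partial)$ comes from the kernels of $\Phi^{|w|}$ and $\Phi^{|z|}$, parameterized using Lemma~\ref{l3} by non-trivial cocycle corrections to $\alpha(w)$ and $\alpha(z)$ modulo boundaries.

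The main obstacle is the delicate cohomological bookkeeping: identifying precisely which terms in $\partial z$ represent linearly independent non-trivial classes in $H^{|z|+1}(\Lambda V^{<|z|},\partial)$, and carefully counting the $\mathbb{Z}_2$-factors contributed by the kernels of $\Phi^{|w|}$ and $\Phi^{|z|}$ at the corresponding degrees. The specific exponents $9\cdot 2^{n+2-k}$ and $2^n+7$, together with the cubic syzygy structure of the first summand of $\partial z$, are engineered so that the sign choices from the scalar system and the cohomological kernels combine to produce exactly $2^{n+1}$ independent $\mathbb{Z}_2$-factors.
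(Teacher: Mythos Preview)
Your overall plan---peeling off generators in increasing degree via Proposition~\ref{p3} and Remark~\ref{r4}, forcing $\alpha(x_k)=\lambda_kx_k$ from the differentials of $y_1,y_2,y_3,w$, and then reading off a multiplicative system in the $\lambda_k$ from $\partial z$---is precisely the paper's argument. But two points in your accounting are wrong and would spoil the count.

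First, no term $x_k^{9\cdot 2^{n+2-k}}$ in $\partial z$ becomes exact. The only boundaries in degree $|z|+1$ that involve $w$ are
\[
\partial\bigl(w\,x_1^{d_1}x_2^{d_2}x_3^{d_3}x_4^{d_4}\bigr)=x_1^{28+d_1}x_2^{18+d_2}x_3^{18+d_3}x_4^{18+d_4}x_5^{18}\cdots x_n^{18},
\]
which are products of all of $x_1,\dots,x_n$ and can never equal a pure power $x_k^{9\cdot 2^{n+2-k}}$. Remark~\ref{r5}(2) records this explicitly: every monomial appearing in $\partial z$ is not reached by the differential, so each gives a linearly independent nonzero class in $H^{|z|+1}(\Lambda V^{<|z|})$. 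The full system $\lambda_z=\lambda_k^{9\cdot 2^{n+2-k}}$ for $1\le k\le n+1$, $\lambda_z=\lambda_1^{9}\lambda_{n+2}^{9}$, and $\lambda_z=\lambda_1^{2^n+7}\lambda_{n+1}^{5}\lambda_{n+2}^{6}$ must therefore be imposed with no equation dropped.

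Second, the kernels of $\Phi^{|w|}$ and $\Phi^{|z|}$ contribute nothing. Lemmas~\ref{l22} and~\ref{l2} show that every cocycle in $(\Lambda V^{<|w|})^{|w|}$ and in $(\Lambda V^{<|z|})^{|z|}$ is a coboundary; hence by Lemma~\ref{l3} any cocycle correction to $\alpha(w)$ or $\alpha(z)$ is homotopic to zero. There are no ``non-trivial cocycle corrections modulo boundaries'' to parametrize. All $2^{n+1}$ copies of $\mathbb{Z}_2$ come from the scalar system alone: its solutions over $\mathbb{Q}^*$ force $\lambda_{n+2}=1$ and leave $n+1$ independent sign choices among $\lambda_1,\dots,\lambda_{n+1}$, giving exactly $2^{n+1}$ homotopy classes, each of order at most~$2$.
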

Thereafter we will need the following facts.
\begin{lemma}
\label{l1} There are no cocycles (except  0) in $(\Lambda V^{\leq
i-1})^{i}$ for $i=5.2^{n+1}-3$, $6.2^{n+1}-5$, $7.2^{n+1}-7$.
\end{lemma}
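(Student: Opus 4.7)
The plan is to exploit parities together with a rigid classification of odd-degree elements in $\Lambda V^{\le i-1}$. I would first observe that $|x_k|=2^k$ for $1\le k\le n+1$ and $|x_{n+2}|=2^{n+2}-2$ are all even, while the three relevant degrees $|y_1|=5\cdot 2^{n+1}-3$, $|y_2|=6\cdot 2^{n+1}-5$, $|y_3|=7\cdot 2^{n+1}-7$ are all odd, and $|w|,|z|$ strictly exceed $|y_3|$. Consequently, for each $i\in\{|y_1|,|y_2|,|y_3|\}$, the subspace $V^{\le i-1}$ consists of the $x_k$'s and possibly one or both of $y_1,y_2$.

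For $i=|y_1|$ the algebra $\Lambda V^{\le i-1}$ is a polynomial algebra on even-degree generators, hence $(\Lambda V^{\le i-1})^i=0$ on parity grounds alone. For $i=|y_2|$ the only additional generator that can appear is $y_1$ (with $y_1^2=0$), so every odd-degree element takes the form $y_1 Q$ with $Q\in\Bbb Q[x_1,\ldots,x_{n+2}]$; applying $\partial$ gives $x_{n+1}^3 x_{n+2}Q$, and by freeness of the polynomial algebra the cocycle condition forces $Q=0$.

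For $i=|y_3|$ the same reasoning shows each odd-degree element has the shape $B(x)y_1+C(x)y_2$, and the cocycle equation reduces to
\[
x_{n+1}^2 x_{n+2}\bigl(x_{n+1}B+x_{n+2}C\bigr)=0.
\]
Freeness of the polynomial algebra on the $x_k$'s together with the coprimality of $x_{n+1}$ and $x_{n+2}$ then forces $B=x_{n+2}R$ and $C=-x_{n+1}R$ for some polynomial $R\in\Bbb Q[x_1,\ldots,x_{n+2}]$.

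The one nontrivial step, and the main obstacle, is the final degree check in this third case: a direct calculation gives $|R|=|y_3|-|y_1|-|x_{n+2}|=(7\cdot 2^{n+1}-7)-(5\cdot 2^{n+1}-3)-(2^{n+2}-2)=-2$, which is impossible, so $R=0$ and therefore $B=C=0$. The fact that this count misses by exactly $2$ indicates that the degree assignments in the definition of $(\Lambda V,\partial)$ have been calibrated precisely so that no spurious cocycle appears in any of the three $y$-degrees.
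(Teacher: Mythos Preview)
Your argument is correct and in fact cleaner than the paper's. The paper proceeds by explicit enumeration: for $i=|y_2|$ and $i=|y_3|$ it writes down a list of monomials claimed to span $(\Lambda V^{\le i-1})^i$, applies $\partial$ to each, and observes that the images are linearly independent monomials in the polynomial algebra on the $x_k$'s, so no nonzero linear combination can be a cocycle. (The enumeration given in the paper is actually incomplete for general $n$---for instance, for $i=|y_2|$ and $n\ge 3$ there are more monomials of the form $y_1 M$ with $|M|=2^{n+1}-2$ than the two listed---but the intended argument still goes through once one notes that distinct monomials $M$ give distinct, hence independent, images $x_{n+1}^3 x_{n+2}M$.)

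Your route avoids enumeration entirely: you use the parity structure to reduce to elements of the form $y_1 Q$ or $B y_1+C y_2$, then exploit that $\Bbb Q[x_1,\dots,x_{n+2}]$ is an integral domain (for the second case) and a UFD (for the third, via the coprimality of $x_{n+1}$ and $x_{n+2}$), finishing with the degree count $|R|=-2$. This is more robust, independent of the particular monomials that occur, and makes transparent why the specific degree choices for the $x_k$'s and $y_j$'s rule out nonzero cocycles. The paper's approach, by contrast, has the advantage of being completely elementary (no UFD reasoning), at the cost of being somewhat ad hoc and, as written, imprecise in its enumeration.
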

\begin{proof}
First since  the generators $x_{k},1\leq k\leq n+2$, have even
degrees we deduce that $(\Lambda V^{\leq
5.2^{n+1}-4})^{5.2^{n+1}-3}=0.$

\noindent Next the vector space $(\Lambda V^{\leq
6.2^{n+1}-6})^{6.2^{n+1}-5}$ has only two generators namely
$y_{1}x^{2^{n}-1}_{1},y_{1}x_{1}x_{2}\ldots x_{n}$ and because of:
$$\partial(y_{1}x^{2^{n}-1}_{1})=x_{n+1}^{3}x_{n+2}x_{1}^{2^{n+1}-1}\,\,\,,\,\,\,\partial(y_{1}x_{1}x_{2}\ldots x_{n})=x_{n+1}^{3}x_{n+2}x_{1}x_{2}\ldots x_{n}$$
we deduce that there  are no cocycles (except  0) in   $(\Lambda V^{\leq
6.2^{n+1}-6})^{6.2^{n+1}-5}$.

\noindent Finally
 $(\Lambda V^{\leq 7.2^{n+1}-8})^{7.2^{n+1}-7}$
is spanned by:
$$y_{1}x^{2^{n+1}-2}_{1}\,,\,y_{1}x^{2^{n}-1}_{2}\,,\,y_{1}x_{1}^{2}x_{2}^{2}\ldots x_{n}^{2}\,,\,y_{2}x^{2^{n}-1}_{1}\,,\,y_{2}x_{1}x_{2}\ldots x_{n}$$
and since   we have:
$$\partial(y_{1}x^{2^{n+1}-2}_{1})=x_{n+1}^{3}x_{n+2}x^{2^{n+1}-2}_{1}\,\,,\,\,\partial(y_{1}x^{2^{n}-1}_{2})=
x_{n+1}^{3}x_{n+2}x^{2^{n}-1}_{2}\,\,,\,\,\partial(y_{2}x^{2^{n}-1}_{1})=x_{n+1}^{2}x_{n+2}^{2}x^{2^{n}-1}_{1},$$
$$\partial(y_{1}x_{1}^{2}x_{2}^{2}\ldots x_{n}^{2})=x_{n+1}^{3}x_{n+2}x_{1}^{2}x_{2}^{2}\ldots x_{n}^{2}\,\,\,,\,\,\,\partial(y_{2}x_{1}x_{2}\ldots x_{n})=x_{n+1}^{2}x_{n+2}^{2}x_{1}x_{2}\ldots x_{n} $$
we conclude that  there  are no cocycles (except  0) belonging to  $(\Lambda V^{\leq
7.2^{n+1}-8})^{7.2^{n+1}-7}$.
\end{proof}

\begin{lemma}
\label{l2} Every  cocycles in $(\Lambda V^{\leq
9.2^{n+2}-2})^{9.2^{n+2}-1}$ is a coboundary.
\end{lemma}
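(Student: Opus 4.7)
The plan is to establish $H^{|z|}(B)=0$ for $B := \Lambda V^{\leq |z|-1} = \Lambda(x_1,\ldots,x_{n+2},y_1,y_2,y_3,w)$, where $|z| = 9\cdot 2^{n+2}-1$. Set $A = \mathbb{Q}[x_1,\ldots,x_{n+2}]$, write $g_i = \partial y_i$ and $f = \partial w$, and note that $A$ consists entirely of cocycles since each $x_i$ is a cocycle of even degree.

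First I would enumerate the general form of $\omega \in B^{|z|}$. Since $|z|$ is odd and only $y_1,y_2,y_3,w$ are odd (hence exterior with square zero), checking which complementary $x$-degrees are nonnegative and of correct parity gives
$$\omega \;=\; wp \;+\; y_1 q_1 + y_2 q_2 + y_3 q_3 \;+\; y_1 y_2 y_3\,r,$$
with $p,q_i,r\in A$ of prescribed degrees ($|p|=16$, $|r|=14$, $|q_i| = |z|-|y_i|$). Imposing $\partial \omega = 0$ and splitting by $\Lambda(y_1,y_2,y_3)$-filtration, the $\Lambda^2(y)$-component reads $r(g_1 y_2y_3 - g_2 y_1y_3 + g_3 y_1y_2) = 0$; since $\{y_2y_3, y_1y_3, y_1y_2\}$ is $A$-independent and each $g_k \neq 0$, this forces $r=0$. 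The $A$-component reduces the cocycle condition to
$$(\star)\qquad p\,f + q_1 g_1 + q_2 g_2 + q_3 g_3 \;=\; 0 \text{ in } A.$$

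Next I exploit the tensor decomposition $A = A_1 \otimes A_2$ with $A_1 = \mathbb{Q}[x_1,\ldots,x_n]$ and $A_2 = \mathbb{Q}[x_{n+1},x_{n+2}]$. Since $f \in A_1$ and $g_i \in A_2$, the element $f$ is a nonzerodivisor modulo the extended ideal $(g_1,g_2,g_3)A$, so $(\star)$ forces $p \in (g_1,g_2,g_3)A$. But the minimal-degree generator there has $|g_1| = 5\cdot 2^{n+1}-2 > 16 = |p|$ whenever $n \geq 1$, hence $p=0$. The cocycle then reduces to $\omega = \sum_i y_i q_i$ with $(q_1,q_2,q_3)$ a syzygy of $(g_1,g_2,g_3)$. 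Factoring $g_i = x_{n+1}x_{n+2}\cdot h_i$ with $(h_1,h_2,h_3) = (x_{n+1}^2,x_{n+1}x_{n+2},x_{n+2}^2)$, the syzygy module is generated by $s_1 = (x_{n+2},-x_{n+1},0)$ and $s_2 = (0,x_{n+2},-x_{n+1})$, yielding
$$\omega \;=\; \beta(x_{n+2}y_1 - x_{n+1}y_2) \;+\; \gamma(x_{n+2}y_2 - x_{n+1}y_3)$$
for some $\beta,\gamma \in A$.

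Finally, I would exhibit $\omega = \partial\eta$ for some $\eta \in B^{|z|-1}$. Parity and degree considerations rule out $wy_i$, $wy_iy_j$, $wy_1y_2y_3$, and pure $y_1y_2y_3$-terms in $\eta$ for $n\geq 1$, so $\eta = \eta_0 + \sum_{i<j} a_{ij}\,y_iy_j$ with $\eta_0 \in A$ contributing nothing to $\partial\eta$. Equating $\partial\eta = \omega$ and working component-wise in the $y_k$-basis, the $y_3$-equation $a_{13}g_1 + a_{23}g_2 = -\gamma x_{n+1}$ forces $a_{13} = c\,x_{n+2}$, $a_{23} = -c\,x_{n+1}$ for some $c \in A$; back-substitution into the $y_1$- and $y_2$-equations shows that $\omega$ lifts to a coboundary provided $\beta$ and $\gamma$ lie in the principal ideal $(x_{n+1}x_{n+2})A$. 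The hard part is therefore to verify this divisibility directly from $(\star)$, by a careful enumeration of admissible monomials of degrees $|\beta| = 11\cdot 2^{n+1}+4$ and $|\gamma| = 10\cdot 2^{n+1}+6$ in $A$; eliminating the \emph{non-Koszul} syzygy contributions in this specific degree is the crux of the argument.
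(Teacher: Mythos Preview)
Your reduction is sound through the syzygy step: the cocycles of the form $\sum y_iq_i$ really are $\beta(x_{n+2}y_1-x_{n+1}y_2)+\gamma(x_{n+2}y_2-x_{n+1}y_3)$ with $\beta,\gamma\in A$ arbitrary in the indicated degrees, and the degree arguments ruling out $r$ and $p$ are fine. But the proof then stops at the point you yourself call the ``crux'': you claim that what remains is to check $\beta,\gamma\in(x_{n+1}x_{n+2})A$ ``from $(\star)$'' and defer this to an enumeration you never carry out. This is a genuine gap, and worse, the step fails as stated. Once $p=0$, equation $(\star)$ is nothing more than the syzygy relation itself and imposes no further constraint on $\beta,\gamma$; in particular $\beta=x_1^{\,11\cdot 2^{n}+2}$, $\gamma=0$ is admissible. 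For the resulting cocycle $\omega=x_1^{\,11\cdot 2^{n}+2}(x_{n+2}y_1-x_{n+1}y_2)$ your own analysis of the allowable $\eta$ shows that the $y_1$-component of every candidate $\partial\eta$ lies in $(g_2,g_3)\subset x_{n+1}A$, which can never equal $x_1^{\,11\cdot 2^{n}+2}x_{n+2}$. So the divisibility you need is simply not available, and the argument cannot be closed along the line you sketch.

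The paper does not pass through syzygy modules at all: it enumerates the monomial generators of $(\Lambda V^{\leq 9\cdot 2^{n+2}-2})^{9\cdot 2^{n+2}-1}$ by type, writes down an explicit family of cocycle generators indexed by exponent vectors $(a_1,\ldots,a_{n+2})$, and for each one exhibits a concrete primitive $y_1y_3\cdot(\text{monomial})$ or $y_2y_3\cdot(\text{monomial})$. That hands-on bookkeeping keeps the exponent shifts $a_{n+1}\mapsto a_{n+1}-1$, $a_{n+2}\mapsto a_{n+2}-3$ (etc.) in plain sight, whereas your syzygy parametrization, though cleaner, hides exactly the nonnegativity conditions on exponents that govern whether a primitive exists. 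The cocycle above sits precisely on that boundary ($a_{n+2}=1$ in the paper's indexing), so any enumeration you intend must confront it explicitly rather than hope it is excluded by degree; at present your outline gives no mechanism for doing so.
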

\begin{proof}
First an easy computation shows that  $(\Lambda V^{\leq
9.2^{n+2}-2})^{9.2^{n+2}-1}$ is generated by the elements on the form:\\

$
\begin{array}{cc}
 y _{1}x_{1}^{a_{1}}x_{2}^{a_{2}}...x_{n+1}^{a_{n+1}}x_{n+2}^{a_{n+2}} & \text{ where  }\,\,\, \underset{i=1}{\overset{n+2}{\sum}}
a_{i}2^{i}-2a_{n+2}=13.2^{n+1}+2 \\
  y_{2}x_{1}^{b_{1}}x_{2}^{b_{2}}...x_{n+1}^{b_{n+1}}x_{n+2}^{b_{n+2}} & \text{ where  }\,\,\, \underset{i=1}{\overset{n+2}{\sum}}
b_{i}2^{i}-2b_{n+2}=12.2^{n+1}+4 \\
 y_{3}x_{1}^{c_{1}}x_{2}^{c_{2}}...x_{n+1}^{c_{n+1}}x_{n+2}^{c_{n+2}} & \text{ where  }\,\,\, \underset{i=1}{\overset{n+1}{\sum}}
c_{i}2^{i}-2c_{n+2}=11.2^{n+1}+6 \\
x_{1}^{e_{1}}x_{2}^{e_{2}}x_{3}^{e_{3}}y_{1}y_{2}y_{3}  & \hspace{-19mm}\text{ where  }\,\,\,
e_{1}+2e_{2}+4e_{3}=7 \\
  wx_{1}^{d_{1}}x_{2}^{d_{2}}x_{3}^{d_{3}}x_{4}^{d_{4}} & \hspace{-8mm}\text{ where  }\,\,\, d_{1}+2d_{2}+4d_{3}+8d_{4}=8
\end{array}
$
\vspace{5mm}

\noindent Since:
\begin{eqnarray*}
\partial(x_{1}^{e_{1}}x_{2}^{e_{2}}x_{3}^{e_{3}}y_{1}y_{2}y_{3})&=& x_{1}^{e_{1}}x_{2}^{e_{2}}x_{3}^{e_{3}}(x_{n+1}^{3}x_{n+2}y_{2}y_{3}-x_{n+1}^{2}x_{n+2}^{2}y_{1}y_{3}+x_{n+1}x_{n+2}^{3}y_{1}y_{2})\\
\partial(wx_{1}^{d_{1}}x_{2}^{d_{2}}x_{3}^{d_{3}}x_{4}^{d_{4}})&=&wx_{1}^{28+d_{1}}x_{2}^{18+d_{2}}x_{3}^{18+d_{3}}x_{4}^{18+d_{4}}x_{5}^{18}\ldots x_{n}^{18}
\end{eqnarray*}
we deduce that  the elements which could be cocycles  in $(\Lambda V^{\leq 9.2^{n+2}-2})^{9.2^{n+2}-1}$ are on the form:  $$\alpha
y_{1}x_{1}^{a_{1}}x_{2}^{a_{2}}...x_{n+1}^{a_{n+1}}x_{n+2}^{a_{n+2}}+\beta
y_{2}x_{1}^{b_{1}}x_{2}^{b_{2}}...x_{n+1}^{b_{n+1}}x_{n+2}^{b_{n+2}}
+\lambda
y_{3}x_{1}^{c_{1}}x_{2}^{c_{2}}...x_{n+1}^{c_{n+1}}x_{n+2}^{c_{n+2}}$$
with   the following relations:
$$a_{i}=b_{i}=c_{i}\,\,\,\,\,\,\,\,,\,\,\,\,\,\,\,\,\,\, 1\leq i\leq n\,\,\,\,\,\,\,\,,\,\,\,\,\,\,\,\,\,\,\alpha+\beta+\lambda=0$$
$$c_{n+1}=a_{n+1}+2\,\,\,\,\,,\,\,\,\,\, c_{n+2}=a_{n+2}-2\,\,\,\,\,\,\,\,,\,\,\,\,\,\,\,\,\,\, b_{n+1}=a_{n+1}+1\,\,\,\,\,,\,\,\,\,\, b_{n+2}=a_{n+2}-1$$
Accordingly  the elements:
$$y_{1}x_{1}^{a_{1}}x_{2}^{a_{2}}...x_{n+1}^{a_{n+1}}x_{n+2}^{a_{n+2}}-y_{3}x_{1}^{a_{1}}x_{2}^{a_{2}}...x_{n+1}^{a_{n+1}+2}x_{n+2}^{a_{n+2}-2}$$
$$y_{2}x_{1}^{a_{1}}x_{2}^{a_{2}}...x_{n+1}^{a_{n+1}+1}x_{n+2}^{a_{n+2}-1}-y_{3}x_{1}^{a_{1}}x_{2}^{a_{2}}...x_{n+1}^{a_{n+1}+2}x_{n+2}^{a_{n+2}-2}$$
with  $\underset{i=1}{\overset{n+1}{\sum}}
a_{i}2^{i}+a_{n+2}(2^{n+2}-2)=13.2^{n+1}+2$, span the space of cocycles in $(\Lambda V^{\leq
9.2^{n+2}-2})^{9.2^{n+2}-1}$.

\noindent Finally due to:
\begin{eqnarray*}
\label{21}
  \partial(y_{1}y_{3}x_{1}^{a_{1}}x_{2}^{a_{2}}...x_{n+1}^{a_{n+1}-1}x_{n+2}^{a_{n+2}-3}) &=& -y_{1}x_{1}^{a_{1}}x_{2}^{a_{2}}...x_{n+1}^{a_{n+1}}x_{n+2}^{a_{n+2}}+y_{3}x_{1}^{a_{1}}x_{2}^{a_{2}}...x_{n+1}^{a_{n+1}+2}x_{n+2}^{a_{n+2}-2} \\
  \partial(y_{2}y_{3}x_{1}^{a_{1}}x_{2}^{a_{2}}...x_{n+1}^{a_{n+1}}x_{n+2}^{a_{n+2}-4}) &=&- y_{2}x_{1}^{a_{1}}x_{2}^{a_{2}}...x_{n+1}^{a_{n+1}+1}x_{n+2}^{a_{n+2}-1}+y_{3}x_{1}^{a_{1}}x_{2}^{a_{2}}...x_{n+1}^{a_{n+1}+2}x_{n+2}^{a_{n+2}-2}\nonumber
\end{eqnarray*}
we deduce that $(\Lambda V^{\leq 9.2^{n+2}-2})^{9.2^{n+2}-1}$  is
generated by  coboundaries and the lemma is proved.
\end{proof}
By the same manner  we have:
\begin{lemma}
\label{l22} The sub-vector space of cocycles in $(\Lambda V^{\leq
9.2^{n+2}-18})^{9.2^{n+2}-17}$ is generated by the  elements on the
form:
$$
y_{1}x_{1}^{a'_{1}}x_{2}^{a'_{2}}...x_{n+1}^{a'_{n+1}}x_{n+2}^{a'_{n+2}}-y_{3}x_{1}^{a'_{1}}x_{2}^{a'_{2}}...x_{n+1}^{a'_{n+1}+2}x_{n+2}^{a'_{n+2}-2}$$
$$y_{2}x_{1}^{a'_{1}}x_{2}^{a'_{2}}...x_{n+1}^{a'_{n+1}+1}x_{n+2}^{a'_{n+2}-1}-y_{3}x_{1}^{a'_{1}}
x_{2}^{a'_{2}}...x_{n+1}^{a'_{n+1}+2}x_{n+2}^{a'_{n+2}-2}$$
where $\underset{i=1}{\overset{n+2}{\sum}}
a'_{i}2^{i}-2a'_{n+2}=13.2^{n+1}-14$. Moreover each generator
of $(\Lambda V^{\leq 9.2^{n+2}-18})^{9.2^{n+2}-17}$  is a coboundary.
\end{lemma}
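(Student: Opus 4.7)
The strategy is to mimic the proof of \lemref{l2} almost verbatim, since passing from the target degree $9.2^{n+2}-1$ to $9.2^{n+2}-17$ does not alter the combinatorial structure of the cocycle equation. First I would enumerate the monomial basis of $(\Lambda V^{\leq 9.2^{n+2}-18})^{9.2^{n+2}-17}$. The subalgebra $\Lambda V^{\leq 9.2^{n+2}-18}$ involves only the generators $x_1,\dots,x_{n+2},y_1,y_2,y_3$: since $|w|=9.2^{n+2}-17$ exceeds the bound by exactly one, $w$ is excluded, and $z$ is excluded a fortiori. Because the target degree $18.2^{n+1}-17$ is odd while all $x_i$ sit in even degrees and all $y_j$ in odd ones, each contributing monomial must contain either one or three $y$-factors. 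The triple product $y_1y_2y_3$ has degree $18.2^{n+1}-15$, exceeding the target by $2$, so three-$y$ monomials are ruled out. Hence the relevant basis consists of elements of the form $y_j\cdot P$ with $j\in\{1,2,3\}$ and $P$ a monomial in $x_1,\dots,x_{n+2}$.

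Next I would write a general element as $\sum_\sigma \alpha_\sigma\, y_1 P_{1,\sigma} + \sum_\tau \beta_\tau\, y_2 P_{2,\tau} + \sum_\rho \lambda_\rho\, y_3 P_{3,\rho}$, apply $\partial$ using $\partial x_i=0$ together with the explicit values of $\partial y_j$, and impose vanishing. Matching monomials forces, for each cocycle component, common exponents $a'_1,\dots,a'_n$ on the first $n$ variables, the linked conditions $b'_{n+1}=a'_{n+1}+1$, $b'_{n+2}=a'_{n+2}-1$, $c'_{n+1}=a'_{n+1}+2$, $c'_{n+2}=a'_{n+2}-2$, and the scalar relation $\alpha+\beta+\lambda=0$. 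Decomposing the kernel of this last relation into the two natural basis vectors $(1,0,-1)$ and $(0,1,-1)$ yields the two cocycle-generator families of the statement, while the degree equation $|y_1|+\sum a'_i|x_i|=9.2^{n+2}-17$ specialises to $\sum_{i=1}^{n+2}a'_i 2^i-2a'_{n+2}=13.2^{n+1}-14$.

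Finally, I would exhibit explicit coboundary witnesses by reusing the identities from \lemref{l2} with shifted exponents: the computations of $\partial(y_1y_3\, x_1^{a'_1}\cdots x_{n+1}^{a'_{n+1}-1}x_{n+2}^{a'_{n+2}-3})$ and $\partial(y_2y_3\, x_1^{a'_1}\cdots x_{n+1}^{a'_{n+1}}x_{n+2}^{a'_{n+2}-4})$ produce, up to sign, precisely the first and second cocycle generators. A direct degree check confirms these witnesses lie in degree $9.2^{n+2}-18$, hence inside $\Lambda V^{\leq 9.2^{n+2}-18}$, so each cocycle generator is indeed a coboundary in the stated subalgebra.

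The main obstacle is the bookkeeping in the first step: one must be certain that no other monomial type (a three-$y$ term, or a term involving $w$) contributes in this ambient degree, and this relies on the tight facts that $|w|$ just exceeds the subalgebra bound and that $|y_1y_2y_3|$ just exceeds the target degree. A secondary technical point is the handling of degenerate exponent ranges (small $a'_{n+1}$ or $a'_{n+2}$) in which the stated witness would carry a negative exponent; such cases should either be excluded by the constraint $\sum a'_i 2^i-2a'_{n+2}=13.2^{n+1}-14$ or be dispatched by an alternative witness of the form $\partial(y_iy_j\cdot\mathrm{monomial})$ constructed along the same lines.
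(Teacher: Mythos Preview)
Your proposal is correct and matches the paper's intended approach exactly: the paper itself provides no explicit proof for this lemma, merely stating ``By the same manner we have'' after the proof of \lemref{l2}, so your plan of reproducing that argument with the obvious degree shift is precisely what is meant. Your additional care in ruling out the $w$- and $y_1y_2y_3$-monomials (which were present in degree $9\cdot 2^{n+2}-1$ but not here) and in noting the degenerate-exponent edge cases goes beyond what the paper writes down.
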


\begin{remark}
\label{r5} We have  the following  elementary facts:

\noindent  1)     Any isomorphism $\xi^{i}:V^{i}\to V^{i}$, where
$i=2,\ldots,2^{n+1}$, $2^{n+2}-2$, $5.2^{n+1}-3$, $6.2^{n+1}-5$, $7.2^{n+1}-7$,
$ 9.2^{n+2}-17$ and $ 9.2^{n+2}-1$, is a multiplication with a
nonzero rational number, so we write:
$$\xi^{2}=p_{1}\,\,\,\,\,,\,\,\,\,\,\xi^{4}=p_{2},\,\,\,\,\,\,\,\,\,\,\ldots\ldots\,\,\,\,\,\,\,\,\,\,,\,\,\,\,\,\,\,\,\,\,\xi^{2^{n+2}}=p_{n+2}$$
$$\xi^{5.2^{n+1}-3}=p_{y_{1}}\,\,\,\,\,,\,\,\,\,\,\xi^{6.2^{n+1}-5}=p_{y_{2}}\,\,\,\,\,,\,\,\,\,\,\xi^{7.2^{n+1}-7}=p_{y_{3}}\,\,\,\,\,,\,\,\,\,\,\xi^{9.2^{n+2}-17}=p_{w}\,\,\,\,\,,\,\,\,\,\,\xi^{9.2^{n+2}-1}=p_{z}$$\\

\noindent  2)   As the generators:
$$ x_{n+1}^{3}x_{n+2}\,\, ,\,\,  x_{n+1}^{2}x_{n+2}^{2}
  \,\, ,\,\,x_{n+1}x_{n+2}^{3}\,\, ,\,\,x_{1}^{28}x_{2}^{18}x_{3}^{18}\ldots x_{n}^{18}\,\, ,\,\,x_{1}^{9.2^{n+1}}\,\, ,\,\,x_{2}^{9.2^{n}},\ldots
,\,\, ,\,\,x_{n+1}^{9}\,\, ,\,\,x_{n+1}^{9.2}\,\, ,\,\,x_{1}^{9}x_{n+2}^{9}$$
$$x_{1}^{2^{n}+7}(y_{1}y_{2}x_{n+2}^{3}-y_{1}y_{3}x_{n+1}x_{n+2}^{2}+y_{2}y_{3}x_{n+1}^{2}x_{n+2})$$
are not reached by the differential and by the definition of the linear maps $b^{n}$  we deduce that:

$$
\begin{array}{cccccc}
 b^{5.2^{n+1}-3}(y_{1}) & = & x_{n+1}x_{n+2}^{3} & b^{9.2^{n+2}-17}(w) & = & x_{1}^{28}x_{2}^{18}x_{3}^{18}\ldots x_{n}^{18} \\
  b^{7.2^{n+1}-7}(y_{3}) & = & x_{n+1}^{3}x_{n+2}& b^{6.2^{n+1}-5}(y_{2}) & = & x_{n+1}^{2}x_{n+2}^{2}
\end{array}
$$
$$b^{9.2^{n+2}-1}(z)  = x_{1}^{2^{n}+7}(y_{1}y_{2}x_{n+2}^{3}-y_{1}y_{3}x_{n+1}x_{n+2}^{2}+
y_{2}y_{3}x_{n+1}^{2}x_{n+2})+\underset{k=1}{\overset{n+1}{\sum}}
x_{k}^{9.2^{n+2-k}}+x_{1}^{9}x_{n+2}^{9}$$

\noindent  3)   Since the differential is nil on the generators
$x_{k}$, for every $1\leq k\leq n+2$, any cochain isomorphism
$\alpha_{(k)}:(\Lambda V^{\leq k},\partial)\to (\Lambda V^{\leq
k},\partial)$  can be written as follows:
\begin{equation}\label{11}
\alpha_{(k)}(x_{k})=p_{k}x_{k}+\sum
q_{_{m_{1},m_{2},\cdots,m_{k-1}}}x_{1}^{m_{1}}x_{2}^{m_{2}}...x_{k-1}^{m_{k-1}}
\end{equation}
where:
$$\underset{i=1}{\overset{k-1}{\sum}}
m_{i}2^{i}=2^{k}\,\,\,\,\, \,\,\,\,\,,\,\,\,\,\,
p_{k},q_{_{m_{1},m_{2},\cdots,m_{k-1}}}\in \Bbb Q\,\,\,\,\,
\,\,\,\,\,,\,\,\,\,\, p_{k}\neq 0$$
\end{remark} Now the last pages
are devoted to the proof of theorem \ref{t1}.

\begin{proof} Let us begin by computing the group
\textbf{$\mathcal{E}(\Lambda V^{\leq 5.2^{n+1}-3},\partial)$}.
Indeed, by  remark \ref{r4} and lemma \ref{l1} the homomorphism
$\Phi^{ 5.2^{n+1}-3}:\mathcal{E}(\Lambda V^{\leq
   5.2^{n+1}-3},\partial)\to\mathcal{C}^{5.2^{n+1}-3}$, given by proposition \ref{p3},  is an isomorphism. So,  by definition \ref{d3}, we have
   to determine all   the couples
   $(\xi^{5.2^{n+1}-3},[\alpha_{(5.2^{n+1}-4)}])\in Aut(V^{5.2^{n+1}-3})\times \mathcal{E}(\Lambda V^{\leq
   5.2^{n+1}-4},\partial)$  such that:
\begin{equation}
\label{5} b^{5.2^{n+1}-3}\circ
\xi^{5.2^{n+1}-3}=H^{5.2^{n+1}-2}(\alpha_{(5.2^{n+1}-4)})\circ
b^{5.2^{n+1}-3}
\end{equation}
 Indeed; since $V^{\leq
   5.2^{n+1}-3}=V^{\leq
   2^{n+2}-2}$ we deduce that, on the generators $x_{k},1\leq k\leq n+2$, the cochain morphism $\alpha_{(5.2^{n+1}-4)}$ is given by the
   relations (\ref{11}).  Therefore:
\begin{eqnarray}
H^{5.2^{n+1}-2}(\alpha_{(5.2^{n+1}-4)})\circ
b^{5.2^{n+1}-3}(y_{1})&=&\Big(p_{n+1}x_{n+1}+\sum
q_{_{m_{1},m_{2},\cdots,m_{n}}}x_{1}^{m_{1}}x_{2}^{m_{2}}...x_{n}^{m_{n}}\Big)^{3}\Big(p_{n+2}x_{n+2}+\nonumber\\
&&\sum
q_{_{m'_{1},m'_{2},\cdots,m'_{n+1}}}x_{1}^{m'_{1}}x_{2}^{m'_{2}}...x_{n+1}^{m'_{n+1}}\Big)\nonumber\\
b^{5.2^{n+1}-3}\circ
\xi^{5.2^{n+1}-3}(y_{1})&=&p_{y_{1}}x_{n+1}^{3}x_{n+2}
\end{eqnarray}
hence  we deduce that
$p_{y_{1}}=p^{3}_{n+1}p_{n+2}$ and that  all the numbers
$q_{_{m_{1},m_{2},\cdots,m_{n}}}$ and
$q_{_{m'_{1},m'_{2},\cdots,m'_{n+1}}}$, given in (\ref{11}),
 should be  nil. Thus we can say that the group
$\mathcal{E}(\Lambda V^{\leq 5.2^{n+1}-3},\partial)$ is consisting
of the classes $[\alpha_{(5.2^{n+1}-3)}]$ such that the cochain
isomorphisms $\alpha_{(5.2^{n+1}-3)}$ satisfy:
$$\alpha_{(5.2^{n+1}-4)}(x_{n+1})=p_{n+1}x_{n+1}\,\,\,,\,\,\alpha_{(5.2^{n+1}-4)}(x_{n+2})=p_{n+2}x_{n+2}\,\,\,,\,\,\alpha_{(5.2^{n+1}-4)}(y_{1}) =p_{y_{1}}y_{1}$$
\begin{eqnarray}
 \label{102}
\alpha_{(5.2^{n+1}-3)}(x_{k}) =p_{k}x_{k}+\sum
q_{_{m_{1},m_{2},\cdots,m_{k-1}}}x_{1}^{m_{1}}x_{2}^{m_{2}}...x_{k-1}^{m_{k-1}}\,\,\,,\,\,1\leq k\leq n
 \end{eqnarray}
with $p_{y_{1}}=p^{3}_{n+1}p_{n+2}$.\\

\noindent \underline{Computation of the group
$\mathcal{E}(\Lambda V^{\leq 6.2^{n+1}-5},\partial)$}\\

This group can be computed from $\mathcal{E}(\Lambda V^{\leq
5.2^{n+1}-3},\partial)$ by using  proposition \ref{p3}. Indeed; by remark \ref{r4} the
homomorphism $\Phi^{6.2^{n+1}-5}:\mathcal{E}(\Lambda V^{\leq
   6.2^{n+1}-5},\partial)\to\mathcal{C}^{6.2^{n+1}-5}$  is also an isomorphism. Recalling again that  the group  $\mathcal{C}^{6.2^{n+1}-5}$
   contains  all  the couples $(\xi^{6.2^{n+1}-5},[\alpha_{(6.2^{n+1}-6)}])$
   such that:
\begin{equation}\label{13}
H^{6.2^{n+1}-4}(\alpha_{(6.2^{n+1}-6)})\circ
b^{6.2^{n+1}-5}=b^{6.2^{n+1}-5}\circ \xi^{6.2^{n+1}-5}.
\end{equation}
Since $\alpha_{(6.2^{n+1}-6)}=\alpha_{(5.2^{n+1}-3)}$ on $V^{\leq
6.2^{n+1}-6}=V^{\leq 5.2^{n+1}-3}$, then  by using (\ref{102})  and the formula giving $b^{6.2^{n+1}-5}$  in remark \ref{r5} we get:
\begin{eqnarray}
H^{6.2^{n+1}-4}(\alpha_{(6.2^{n+1}-6)})\circ
b^{6.2^{n+1}-4}(y_{2})&=&p_{n+1}^{2}p_{n+2}^{2}x_{n+1}^{2}x_{n+2}^{2}\nonumber\\
b^{5.2^{n+1}-3}\circ
\xi^{5.2^{n+1}-3}(y_{2})&=&p_{y_{2}}x_{n+1}^{2}x_{n+2}^{2}
\end{eqnarray}
From the relation (\ref{13}) we deduce that
$p_{y_{2}}=p_{n+1}^{2}p_{n+2}^{2}$. Thus  the group
$\mathcal{E}(\Lambda V^{\leq 6.2^{n+1}-5},\partial)$ is consisting
of all the classes $[\alpha_{(6.2^{n+1}-5)}]$ such that the
cochain isomorphisms $\alpha_{(6.2^{n+1}-5)}$ satisfy:
\begin{eqnarray}
 \label{1021}
 \alpha_{(6.2^{n+1}-5)}(y_{2}) =p_{y_{2}}y_{2} \,\,\,\,\,\,\,\,,\,\,\,\,\,\,\,\,\,\,\,\,\, \alpha_{(6.2^{n+1}-5)}=\alpha_{(5.2^{n+1}-3)}\,\,,\text{ on } V^{\leq 5.2^{n+1}-3}
 \end{eqnarray}
with $p_{y_{2}}=p^{2}_{n+1}p^{2}_{n+2}$.

\noindent \underline{Computation of the group
$\mathcal{E}(\Lambda V^{\leq 7.2^{n+1}-7},\partial)$}\\

\noindent First the same arguments show that $\mathcal{E}(\Lambda
V^{\leq 7.2^{n+1}-7},\partial)$ is isomorphic to  the group
$\mathcal{C}^{7.2^{n+1}-7}$  of  all the couples
$(\xi^{7.2^{n+1}-7},[\alpha_{(7.2^{n+1}-8)}])$
   such that:
\begin{equation}\label{15}
H^{7.2^{n+1}-6}(\alpha_{(7.2^{n+1}-8)})\circ
b^{7.2^{n+1}-7}=b^{7.2^{n+1}-7}\circ \xi^{7.2^{n+1}-7}
\end{equation}
Next since $\alpha_{(7.2^{n+1}-8)}=\alpha_{(6.2^{n+1}-5)}$ on
$V^{\leq 7.2^{n+1}-8}=V^{\leq 6.2^{n+1}-5}$,  we get:
\begin{eqnarray}
H^{7.2^{n+1}-6}(\alpha_{(7.2^{n+1}-8)})\circ
b^{7.2^{n+1}-7}(y_{3})&=&p_{n+1}^{}p_{n+2}^{3}x_{n+1}x_{n+2}^{3}\nonumber\\
b^{7.2^{n+1}-7}\circ \xi^{7.2^{n+1}-7}&=&p_{y_{3}}x_{n+1}x_{n+2}^{3}
\end{eqnarray}
and from  (\ref{15}) we get the
equation $p_{y_{3}}=p_{n+1}p_{n+2}^{3}$. This implies that
$\mathcal{E}(\Lambda V^{\leq 7.2^{n+1}-7},\partial)$
 is consisting of all the classes
$[\alpha_{(7.2^{n+1}-7)}]$ such that the cochain isomorphisms
$\alpha_{(7.2^{n+1}-7)}$ satisfy:
\begin{eqnarray}
 \label{1022}
 \alpha_{(7.2^{n+1}-7)}(y_{3})=p_{y_{3}}y_{3},\,\,\,\,\,\,\,\,,\,\,\,\,\,\,\,\alpha_{(7.2^{n+1}-7)} = \alpha_{(6.2^{n+1}-5)}
 \,\,\,\,\,\,,\,\text{ on }\,\,\,\,V^{\leq 6.2^{n+1}-5}
 \end{eqnarray}
with  $p_{y_{3}}=p_{n+1}p^{3}_{n+2}$.\\

\noindent \underline{The group
$\mathcal{E}(\Lambda V^{\leq 9.2^{n+2}-17},\partial)$}\\
Let us  determine  the group $\mathcal{C}^{9.2^{n+2}-17}$ of
all the  couples $(\xi^{9.2^{n+2}-17},[\alpha_{(9.2^{n+2}-16)}])$  such that:
\begin{equation}\label{333}
H^{9.2^{n+2}-16}(\alpha_{(9.2^{n+2}-18)})\circ
b^{9.2^{n+2}-17}=b^{9.2^{n+2}-17}\circ \xi^{9.2^{n+2}-17}
\end{equation}
Note that $\alpha_{(9.2^{n+2}-18)}=\alpha_{(7.2^{n+1}-7)}$ on
$V^{\leq 9.2^{n+1}-18}=V^{\leq 7.2^{n+1}-7}$. So we deduce that:
\begin{eqnarray}
H^{9.2^{n+2}-16}(\alpha_{(9.2^{n+2}-18)})\circ
b^{9.2^{n+2}-17}(w)&=&p_{1}^{28}x_{1}^{28}.\underset{k=2}{\overset{n}\prod}\Big(p_{k}x_{k}+\sum
q_{_{m_{1},m_{2},\cdots,m_{k-1}}}x_{1}^{m_{1}}x_{2}^{m_{2}}...x_{k-1}^{m_{k-1}}\Big)^{18}\nonumber\\
b^{9.2^{n+2}-17}\circ
\xi^{9.2^{n+2}-17}(w)&=&p_{w}x_{1}^{28}x_{2}^{18}x_{3}^{18}\ldots
x_{n}^{18}
\end{eqnarray}
Now from the relation (\ref{333})  we deduce that
$p_{w}=p_{1}^{38}p_{2}^{18}\ldots p_{n}^{18}$ and that  all the
numbers $q_{_{m_{1},m_{2},\cdots,m_{k-1}}}$, given in (\ref{11}),
 should be  nil.

\noindent  Now by proposition \ref{p3} we have:
$$(\Phi^{9.2^{n+2}-17})^{-1}(\mathcal{C}^{9.2^{n+2}-17})=\mathcal{E}(\Lambda
V^{\leq 9.2^{n+2}-17},\partial)$$
so, by going back to the relation (\ref{113}), we can say that if  $[\alpha]\in\mathcal{E}(\Lambda
V^{\leq 9.2^{n+2}-17},\partial)$, then:
\begin{eqnarray}
 \label{2}
 \alpha(w)&=&p_{w}w+a
 \end{eqnarray}
 where  $a\in(\Lambda V^{\leq 9.2^{n+2}-18})^{ 9.2^{n+2}-17}$. A simple computation shows that:
$$(\alpha\circ\partial-\partial\circ\xi^{9.2^{n+2}-17})(V^{9.2^{n+2}-17})\cap \partial_{\leq 9.2^{n+2}-18}
\big((\Lambda V^{\leq 9.2^{n+2}-18})^{9.2^{n+2}-17}\big)=\{0\}.$$
Therefore by remark \ref{r4}   the element $a$ is a cocycle. But
lemma \ref{l22} asserts that any cocycle in $(\Lambda V^{\leq
9.2^{n+2}-18})^{ 9.2^{n+2}-17}$
  is a coboundary.

\noindent    Thus summarizing our above analysis we infer that  the cochain isomorphisms
$\alpha$ satisfy:
\begin{eqnarray}
 \label{1}
 \alpha(w)&=&p_{w}w+\partial(a')\,\,\,\,\,,\,\,\,\,\text{ where }\partial(a')=a\\
 \alpha&=&\alpha_{(7.2^{n+1}-7)}\,\,\,\,\,,\,\,\,\,\,\,\,\,\,\text{ on }V^{\leq
 9.2^{n+1}-18}\nonumber
\end{eqnarray}
Finally by lemma \ref{l3} all these cochain isomorphisms form one
homotopy class which we represent  by  the cochain isomorphism
 denoted  $\alpha_{(9.2^{n+2}-17)}$ and  satisfying:
 \begin{eqnarray}
 \label{20}
 \alpha_{(9.2^{n+2}-17)}(w)&=&p_{w}w\,\,\,\,\,\,,\,\,\,\,\alpha_{(9.2^{n+2}-17)}(x_{k})=p_{k}x_{k}\,\,\,\,,\,\,\,\,1\leq
k\leq n
 \\
 \alpha_{(9.2^{n+2}-17)}(y_{1})&=&p_{y_{1}}y_{1}\,\,\,\,,\,\,\,\,\alpha_{(9.2^{n+2}-17)}(y_{2})=p_{y_{2}}y_{2}\,\,\,\,,\,\,\,\, \alpha_{(9.2^{n+2}-17)}(y_{3})=p_{y_{3}}y_{3}\nonumber
 \end{eqnarray}
 with:
\begin{eqnarray}
 \label{111}
 p_{y_{1}}=p^{3}_{n+1}p_{n+2}\,\,\,\,,\,\,\,\,p_{y_{2}}=p^{2}_{n+1}p^{2}_{n+2}\,\,\,\,,\,\,\,\,
 p_{y_{3}}=p_{n+1}p^{3}_{n+2}\,\,\,\,,\,\,\,\,p_{w}=p_{1}^{28}p_{2}^{18}\ldots
 p_{n}^{18}.
 \end{eqnarray}

\noindent \underline{Computation of the group
$\mathcal{E}(\Lambda V^{\leq 9.2^{n+2}-1},\partial)$}\\

\noindent  $\mathcal{C}^{9.2^{n+2}-1}$ is the group of  all the
couples $(\xi^{9.2^{n+2}-1},[\alpha_{(9.2^{n+2}-2)}])$
   such that:
\begin{equation}\label{155}
H^{9.2^{n+2}}(\alpha_{(9.2^{n+2}-2)})\circ
b^{9.2^{n+2}-1}=b^{9.2^{n+2}-1}\circ \xi^{9.2^{n+2}-1}.
\end{equation}
Due to the fact that
$\alpha_{(9.2^{n+2}-2)}=\alpha_{(9.2^{n+2}-17)}$ on $V^{\leq
9.2^{n+1}-2}=V^{\leq 9.2^{n+1}-17}$, we deduce that
$\alpha_{(9.2^{n+1}-2)}$ satisfies the relations (\ref{20}).
Consequently:
\begin{eqnarray}
\label{101}
  b^{9.2^{n+2}-1}\circ
\xi^{9.2^{n+2}-1}(z) &=&
p_{z}x_{1}^{2^{n}+7}(y_{1}y_{2}x_{n+2}^{3}-y_{1}y_{3}x_{n+1}x_{n+2}^{2}+
y_{2}y_{3}x_{n+1}^{2}x_{n+2}) \nonumber\\
   & +& \underset{k=1}{\overset{n+1}{\sum}}
p_{z}x_{k}^{9.2^{n+2-k}}+p_{z}x_{1}^{9}x_{n+2}^{9} \nonumber\\
  H^{9.2^{n+2}}(\alpha_{(9.2^{n+2}-2)})\circ
b^{9.2^{n+2}-1}(z) &=&
p_{1}^{2^{n}+7}p_{n+1}^{5}p_{n+2}^{6}x_{1}^{2^{n}+7}(y_{1}y_{2}x_{n+2}^{3}-y_{1}y_{3}x_{n+1}x_{n+2}^{2}+
y_{2}y_{3}x_{n+1}^{2}x_{n+2}) \nonumber\\
   & +&\underset{k=1}{\overset{n+1}{\sum}}
p_{k}^{9.2^{n+2-k}}x_{k}^{9.2^{n+2-k}}+p_{1}^{9}p_{n+2}^{9}x_{1}^{9}x_{n+2}^{9}
\end{eqnarray}
Therefore  from the formulas  (\ref{155}) and (\ref{101}) we
deduce the following equations:
$$p_{z}=p_{1}^{2^{n}+7}p_{n+1}^{5}p_{n+2}^{6}=p_{1}^{9.2^{n+1}}=\ldots =p_{n}^{9.2^{2}}=p_{n+1}^{9.2^{}}=p_{1}^{9}p_{n+2}^{9}.$$

\noindent  Again by proposition \ref{p3} we have:
$$(\Phi^{9.2^{n+2}-1})^{-1}(\mathcal{C}^{9.2^{n+2}-1})=\mathcal{E}(\Lambda
V^{\leq 9.2^{n+2}-1},\partial)$$ so, by going back to the relation
(\ref{113}), if  $[\beta]\in\mathcal{E}(\Lambda V^{\leq
9.2^{n+2}-1},\partial)$, then $\beta(z)=p_{z}z+c$ where, by using
remark
  \ref{r4},   the element $c$ is a cocycle in $(\Lambda V^{\leq 9.2^{n+2}-2})^{ 9.2^{n+2}-1}$.  By lemma \ref{l22}  any cocycle  is a coboundary. Thus  the cochain morphism
$\beta$ satisfy:
\begin{eqnarray}
 \label{1}
 \beta(z)&=&p_{z}z+\partial(c')\,\,\,\,\,,\,\,\,\,\text{ where }\partial(c')=c\\
 \beta&=&\alpha_{9.2^{n+2}-17}\,\,\,\,\,\,\,\,\,,\,\,\,\,\,\,\,\text{ on }V^{\leq
 9.2^{n+2}-2}.\nonumber
 \end{eqnarray}
 Due to lemma  \ref{l3} all these cochain isomorphisms form one homotopy class  which we represent by $\alpha_{(9.2^{n+2}-1)}$ and  satisfying:\\
$
 \begin{array}{ccccccccc}
   \alpha_{(9.2^{n+2}-1)}(z) & = &p_{z}z \,,& \alpha_{(9.2^{n+2}-1)} (w)& = & p_{w}w \,,& \alpha_{(9.2^{n+2}-1)}(x_{k}) & =& p_{k}x_{k}\,,\,1\leq k\leq n+2\\
   \alpha_{(9.2^{n+2}-1)}(y_{1}) & = & p_{y_{1}}y_{1}\,, & \alpha_{(9.2^{n+2}-1)}(y_{2}) & = & p_{y_{2}}y_{2} \,,& \alpha_{(9.2^{n+2}-1)}(y_{3}) & = &\hspace{-3cm} p_{y_{3}}y_{3}
 \end{array}
$

\noindent with the following equations:
$$p_{y_{1}}=p_{n+1}^{3}p_{n+2}\,\,,\,\,p_{y_{2}}=p_{n+1}^{2}p_{n+2}^{2}\,\,,\,\,p_{y_{3}}=p_{n+1}p_{n+2}^{3}\,\,,\,\,p_{w}=p_{1}^{28}p_{2}^{18}\ldots
p_{2}^{18}$$
$$p_{z}=p_{1}^{2^{n}+7}p_{n+1}^{5}p_{n+2}^{6}=p_{1}^{9.2^{n+1}}=\ldots =p_{n}^{9.2^{2}}=p_{n+1}^{9.2^{}}=p_{1}^{9}p_{n+2}^{9}$$
which have the following solutions:
$$p_{n+2}=p_{y_{2}}=p_{w}=1\,\,\,\,\,\,\,\,,\,\,\,\,\,\,\,\,p_{z}=p_{y_{1}}=p_{y_{3}}=p_{1}=p_{2}=\ldots =p_{n}=p_{n+1}=\pm 1.$$
So we distinguish two cases:

 \noindent First case: when $p_{n+1}=1$, then:
 $$p_{n+2}=p_{y_{2}}=p_{w}=1\,\,\,\,\,\,\,\,,\,\,\,\,\,\,\,\,p_{z}=p_{y_{1}}=p_{y_{3}}=p_{n+1}=1 \,\,\,\,\,\,\,\,,\,\,\,\,\,\,\,\,p_{1}=p_{2}=\cdots =p_{n}=\pm 1.$$
 so we find $2^{n}$ homotopy classes.

\noindent Second case: when $p_{n+1}=-1$, then:
$$p_{n+2}=p_{y_{2}}=p_{w}=1\,\,\,\,\,\,\,\,,\,\,\,\,\,\,\,\,p_{z}=p_{y_{1}}=p_{y_{3}}=p_{n+1}=-1 \,\,\,\,\,\,\,\,,\,\,\,\,\,\,\,\,p_{1}=p_{2}=\cdots =p_{n}=\pm 1.$$
 and  we also find $2^{n}$ homotopy classes.  Hence, in total,  we get   $2^{n-1}$ homotopy classes  which are of order 2 (excepted the classs of the identity )
in the group $\mathcal{E}(\Lambda V^{\leq 9.2^{n+2}-1},\partial)$.

\noindent In conclusion we conclude that:
$$\mathcal{E}(\Lambda
V^{\leq 9.2^{n+2}-1},\partial) \cong\underset{2^{n+1}\mathrm{.
times }}{\underbrace{\Bbb Z_{2}\oplus\cdots \Bbb \oplus \Bbb
Z_{2}}}$$
 Now by  the fundamental theorems of rational homotopy
theory  due to Sullivan \cite{H} we can find a  1-connected
rational CW-complex $X_{n}$ such that
$\mathcal{E}(X_{n})\cong\mathcal{E}(\Lambda V^{\leq
9.2^{n+2}-1},\partial) \cong\underset{2^{n+1}\mathrm{. times
}}{\underbrace{\Bbb Z_{2}\oplus\cdots \Bbb \oplus \Bbb Z_{2}}}$.
\end{proof}
\begin{remark}
The  spaces $X_{n}$ are infinite-dimensional CW-complexes:
rational homology is non-zero in infinitely many degrees and, as
rational spaces, with infinitely many cells in each degree in
which they have non-zero homology.
\end{remark}
  We close this work by conjecturing that for   a  1-connected rational CW-complex $X$, if  the group  is not trivial, then $\mathcal{E}(X)$ is either infinite or $\mathcal{E}(X)
\cong \underset{2^{n}\mathrm{. times }}{\underbrace{\Bbb
Z_{2}\oplus\cdots \Bbb \oplus \Bbb Z_{2}}}$ for a certain natural
number $n$.

\end{document}